\documentclass[a4paper,11pt,onecolumn]{amsart}
\usepackage{amsfonts}



\usepackage{amscd}
\usepackage{amsmath}
\usepackage{amssymb}
\usepackage{amsthm}
\usepackage{epsf}
\usepackage{latexsym}
\usepackage{verbatim}
\usepackage{color}
\usepackage{todonotes}
\usepackage{enumitem}
\input epsf.tex
\usepackage{bbm}
\usepackage{mathtools}

\usepackage{epstopdf}
\usepackage{graphicx}
\usepackage{sidecap}

\theoremstyle{plain}
\newtheorem{theorem}{Theorem}
\newtheorem{definition}{Definition}[section]
\newtheorem{lemma}[definition]{Lemma}
\newtheorem{corollary}[definition]{Corollary}
\newtheorem{proposition}[definition]{Proposition}

\theoremstyle{definition}

\newtheorem*{thm2}{Theorem~2}

\theoremstyle{remark}


\newenvironment{example}
  {\pushQED{\qed}\examplex}
  {\popQED\endexamplex}


\title[Existence of Maximum Likelihood Estimates in ERGMs]{Existence of Maximum Likelihood Estimates in Exponential Random Graph Models}

\author{Henry Bayly}
\address{Boston College; Department of Mathematics; Maloney Hall; Chestnut Hill, MA 02467}
\email{baylyh@bc.edu}

\author{Aditya Khanna}
\thanks{A.K. was supported by NIH grants P20 GM 130414 and P30 AI 042853.} 
\address{Brown University; School of Public Health; 121 South Main Street
Providence, RI 02912}
\email{adityakhanna@brown.edu}

\author{Kathryn Lindsey}
\thanks{K.L. was partially supported by NSF grant \#1901247}
\address{Boston College; Department of Mathematics; 567 Maloney Hall; Chestnut Hill, MA 02467}
\email{lindseka@bc.edu}

\begin{document}

\begin{abstract}
We present a streamlined proof of the foundational result in the theory of exponential random graph models (ERGMs) that the maximum likelihood estimate exists if and only if the target statistic lies in the relative interior of the convex hull of the set of realizable statistics.  
\end{abstract}

\maketitle


\section{Introduction}

Exponential Random Graph Models (ERGMs) are a family of probability distributions $\textrm{Pr}_{\theta}$, parameterized by $\theta \in \mathbb{R}^p$, on the set $\mathcal{G}_k$ of all graphs with $k$ vertices.  ERGMs are used to model real-world networks in policy studies \cite{Robins2012}, global migration \cite{Windzio2018, Hancean2020}, education \cite{Chen2020, Wu2021},
scientific collaboration and knowledge creation \cite{10.1007/978-3-642-03367-4_25},
viral transmission through human networks \cite{Andalibi2021, Goodreau2012_PUMA, Jenness2016, Khanna2019, Zelner2020}. Notably, in the study of prevention of sexually transmitted diseases, common modeling techniques based on differential equations do not adequately capture the network structure of interest \cite{Carnegie2012, Khanna2014} and ERGMs enable more accurate simulation of epidemics \cite{Jenness2018}. 
Computational scientists use computed or estimated graph statistics to select a probability distribution $\textrm{Pr}_{\hat{\theta}}$ that assigns a higher probability to graphs whose structures more closely match the empirical target statistics; they then investigate phenomena on a sample of graphs selected randomly from $\mathcal{G}_k$ with respect to $\textrm{Pr}_{\theta}$. Open source tools to estimate and simulate networks are now widely used \cite{statnet, Hunter2008ergm, krivitsky2012package}.
Consequently, the question of existence and uniqueness of a parameter $\hat{\theta}$ so that $\textrm{Pr}_{\hat{\theta}}$ maximizes the likelihood of the target statistics is fundamental to the theory of ERGMs.

A well-known and foundational result in the theory of ERGMs is that 
 there exists $\hat{\theta}$ that maximizes the likelihood estimate if and only if the target statistics lie in the relative interior of the convex hull of the set of realizable statistics; furthermore if such a $\hat{\theta}$ exists, it is unique.  Multiple formulations and proofs of this  result exist in the literature (e.g. \cite{barndorff2014information, rinaldo2009geometry, brown1986fundamentals, Geyer, CM}).  \emph{The main contribution of this article is a streamlined proof of this result}; the goal is to enable scholars who are not experts in convex analysis to understand -- via a concise, readable, and mathematically rigorous proof -- why this important theorem is true.

 Our aim is to present the most direct and intuitive path possible to prove this result, avoiding any non-essential jargon and background.  Except for an appeal to a simple case of H\"{o}lder's Inequality (in the proof of Proposition \ref{p:concavity}), our proof uses only undergraduate-level real analysis.  Generalized versions of the result -- for example, applying to broader classes of exponential distributions or directed graphs -- requiring more delicate proof techniques are treated in the literature referenced above; in particular, the framework of \cite{barndorff2014information} applies to non-differentiable functions.


  

\begin{figure}
\centering
\includegraphics[width=.5\linewidth]{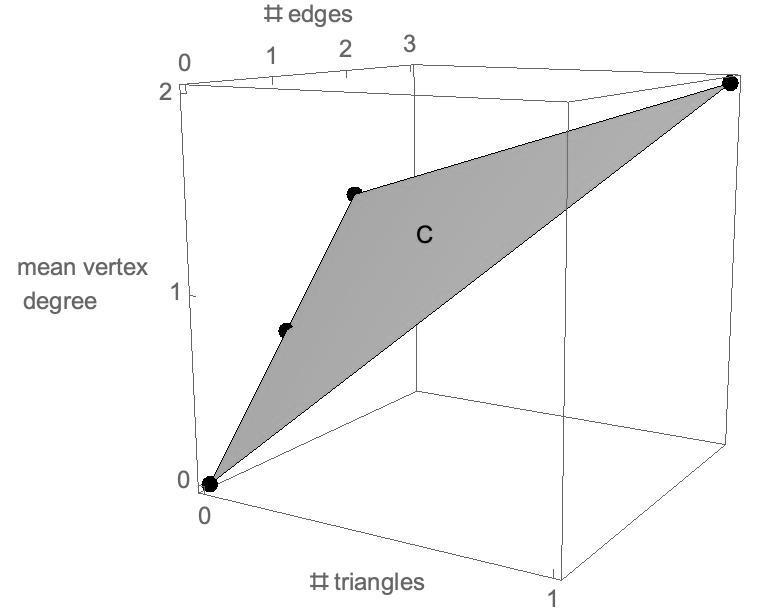}
\caption{Consider $z:\mathcal{G}_3 \to \mathbb{R}^3$ to be the statistics $(\# \textrm{triangles}, \# \textrm{edges}, \# \textrm{mean vertex degree})$.  The space $\mathcal{G}_3$ consists of $8$ graphs (which form 4 isomorphism classes).  
The set of realizable statistics,  $z(\mathcal{G}_3) \coloneqq \{z(g) \mid g \in \mathcal{G}_3\}$, are the 4 black points; the convex hull $C$ of this set is shown in gray. Because number of edges and mean vertex degree are linearly dependent, the set $C$ is not top-dimensional -- it is a 2-dimensional polytope embedded in $\mathbb{R}^3$. The interior of $C$ (viewed as a subset of $\mathbb{R}^3$) is the empty set; the relative interior of $C$ consists of all points in the interior of the gray triangle (viewed as a subset of its affine hull).}
\label{f:rint}
\end{figure}

\bigskip

\subsection{Statement and discussion of theorems}
Before stating the result precisely, we establish notation.  First, we specify that $\mathcal{G}_k$  is the set of all labeled, undirected graphs with $k$ vertices (``labeled'' means that we do not identify isomorphic graphs). Fix a function $z:\mathcal{G}_k \to \mathbb{R}^n$.  We think of each coordinate of  $z(g)$ as recording one statistic about the graph $g$ (for example, number of edges, number of triangles, measures of connectivity, etc.).  Define $C$ to be the convex hull (in $\mathbb{R}^n$) of $z(\mathcal{G}_k)$, the set of realizable statistics. Recall that, if we denote the vertices (extreme points) of $C$ by $v_1,\ldots,v_m$, the relative interior of $C$ may be defined as
\begin{equation} \label{eq:rintdef}
\textrm{rint}(C) \coloneqq \left \{\sum_{i=1}^m \lambda_i v_i \mid \lambda_i > 0, \sum_{i=0} \lambda_i^m = 1 \right \}.
\end{equation}
If the statistics that are the coordinate functions of $z$ are all linearly independent on $\mathcal{G}_k$, then $C$ is a polytope of dimension $n$ embedded in $\mathbb{R}^n$; if not, we let $V$ be the smallest linear subspace of $\mathbb{R}^n$ that contains a translate of $C$, i.e. $$V \coloneqq  \textrm{span}(\{z(g_1) - z(g_2) \mid g_1,g_2 \in \mathcal{G}_k\}),$$ and
set $V^{\perp} \coloneqq \{u \in \mathbb{R}^n \mid u \cdot v = 0 \textrm{ for all } v \in V\}$.
We define the family of probability distributions $\{\textrm{Pr}_{\theta} \mid \theta \in \mathbb{R}^n\}$ on $\mathcal{G}_k$ by 
$$\textrm{Pr}_{\theta}(g) \coloneqq \frac{e^{\theta \cdot z(g)}}{\sum_{\bar{g} \in \mathcal{G}_k} e^{\theta \cdot z(\bar{g})}}.$$ Fix $t \in \mathbb{R}^n$, the ``target statistic.''  The \emph{likelihood estimate} $L(\theta)$ is the weight assigned by $\textrm{Pr}_{\theta}$ to a graph $g$ such that $z(g) = t$, i.e. $L:\mathbb{R}^n \times \mathbb{R}^n \to (0,\infty)$ is the function 
$$L(\theta) \coloneqq  \frac{e^{\theta \cdot t }}{\sum_{\bar{g} \in \mathcal{G}_k} e^{\theta \cdot z(\bar{g})}}.$$ We will find it convenient to work with the logarithm of the likelihood; define $\ell:\mathbb{R}^n \to \mathbb{R}$ by 
$$\ell(\theta) \coloneqq \ln(L(\theta)).$$ 

We are now ready to state the well-known and foundational result about the existence and uniqueness of the maximum likelihood estimate for ERGMs.  We reformulate this result as Theorems \ref{t:mainthm} and \ref{t:degeneracy} below.

\begin{theorem} \label{t:mainthm}
If $dim(C) =n$, then
\begin{enumerate}
 \item the log likelihood function $\ell$ is strictly concave, and 
 \item there exists a parameter $\hat{\theta} \in \mathbb{R}^n$ such that $L(\hat{\theta})= \sup_{\theta \in \mathbb{R}^n} L(\theta)$ if and only if $t$ is in the interior of $C$; furthermore such a $\hat{\theta}$ is unique.  
  \end{enumerate}
  If $dim(C) < n$, then 
  \begin{enumerate}
 \item[(3)] the log likelihood function $\ell$ is concave but not strictly concave, 
 \item[(4)] there exists a parameter $\hat{\theta} \in V$ such that $L(\hat{\theta})= \sup_{\theta \in \mathbb{R}^n} L(\theta)$ if and only if $t$ is in the relative interior of $C$; furthermore there is a unique such $\hat{\theta}$ in $V$, and
 \item[(5)] $L(\theta) = L(\theta+u)$ for all  $\theta \in \mathbb{R}^n$ and $u \in V^{\perp}$. 
   \end{enumerate}

\end{theorem}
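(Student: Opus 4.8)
The plan is to analyze $\ell$ via the log-partition function $\psi(\theta)\coloneqq\ln\sum_{\bar g\in\mathcal{G}_k}e^{\theta\cdot z(\bar g)}$, so that $\ell(\theta)=\theta\cdot t-\psi(\theta)$. First I would record the standard exponential-family identities $\nabla\psi(\theta)=\sum_{\bar g}z(\bar g)\,\textrm{Pr}_\theta(\bar g)=\mathbb{E}_\theta[z]$ and $\nabla^2\psi(\theta)=\operatorname{Cov}_\theta(z)$, whence $\nabla^2\ell(\theta)=-\operatorname{Cov}_\theta(z)\preceq 0$ and $\ell$ is concave on $\mathbb{R}^n$. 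For any $u\in\mathbb{R}^n$ one has $u^\top\operatorname{Cov}_\theta(z)\,u=\operatorname{Var}_\theta(u\cdot z)$, which vanishes precisely when $u\cdot z$ is constant on $\operatorname{supp}(\textrm{Pr}_\theta)=\mathcal{G}_k$, i.e.\ precisely when $u\in V^\perp$. Thus if $\dim(C)=n$ then $V^\perp=\{0\}$, $\nabla^2\ell\prec 0$ everywhere, and $\ell$ is strictly concave, giving (1); if $\dim(C)<n$ there is a nonzero $u\in V^\perp$, and since $\psi$ is smooth with $u^\top\nabla^2\psi\,u\equiv 0$ the restriction of $\ell$ to every line in direction $u$ is affine, so $\ell$ is concave but not strictly concave, giving (3). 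The same observation---$u\cdot z(\bar g)$ is a constant independent of $\bar g$ when $u\in V^\perp$, and it equals $u\cdot t$ because $t$ is realized by a graph---makes $e^{u\cdot z(\bar g)}$ and $e^{u\cdot t}$ cancel between the denominator and numerator of $L(\theta+u)$, giving (5).

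Next I would handle the full-dimensional case $\dim(C)=n$ and prove the ``if'' direction of (2): if $t\in\operatorname{int}(C)$, then $\ell$ is coercive. From $\max_{\bar g}\theta\cdot z(\bar g)\le\psi(\theta)\le\ln|\mathcal{G}_k|+\max_{\bar g}\theta\cdot z(\bar g)$ we get $\ell(\theta)\le\ln|\mathcal{G}_k|-\max_{\bar g}\theta\cdot\big(z(\bar g)-t\big)$. Choosing $\varepsilon>0$ with $B(t,\varepsilon)\subseteq C$ and writing $t+\varepsilon\omega$ (for a unit vector $\omega$) as a convex combination of the $z(\bar g)$ yields $\omega\cdot t+\varepsilon\le\max_{\bar g}\omega\cdot z(\bar g)$, hence $\max_{\bar g}\omega\cdot(z(\bar g)-t)\ge\varepsilon$ and $\ell(\theta)\le\ln|\mathcal{G}_k|-\varepsilon\|\theta\|\to-\infty$. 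A continuous coercive function attains its supremum, and uniqueness of the maximizer follows from strict concavity (part (1)).

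For the ``only if'' direction of (2), still with $\dim(C)=n$, suppose $t\notin\operatorname{int}(C)$. If $t\notin C$, the separating hyperplane theorem supplies $\omega$ with $\omega\cdot t>\max_{\bar g}\omega\cdot z(\bar g)$, so $\ell(r\omega)\ge r\big(\omega\cdot t-\max_{\bar g}\omega\cdot z(\bar g)\big)-\ln|\mathcal{G}_k|\to+\infty$ and no maximizer exists. If $t\in\partial C$, pick a supporting hyperplane, i.e.\ $\omega\neq 0$ with $\omega\cdot z(\bar g)\le\omega\cdot t$ for all $\bar g$, and set $S_0\coloneqq\{\bar g\in\mathcal{G}_k:\omega\cdot z(\bar g)=\omega\cdot t\}$. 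Writing $t$ as a \emph{strictly} positive convex combination of the $z(\bar g)$ forces every graph occurring in it into $S_0$, so $S_0\neq\emptyset$; and $\dim(C)=n$ rules out $z(\mathcal{G}_k)\subseteq\{x:\omega\cdot x=\omega\cdot t\}$, so $S_0\neq\mathcal{G}_k$. Then for any fixed $\theta^\ast$, splitting the partition sum over $S_0$ and its complement and letting $r\to\infty$ gives $\ell(\theta^\ast+r\omega)\to\theta^\ast\cdot t-\ln\sum_{\bar g\in S_0}e^{\theta^\ast\cdot z(\bar g)}$, which strictly exceeds $\ell(\theta^\ast)=\theta^\ast\cdot t-\ln\sum_{\bar g\in\mathcal{G}_k}e^{\theta^\ast\cdot z(\bar g)}$ since $\emptyset\neq S_0\subsetneq\mathcal{G}_k$ and the summands are positive. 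Hence no $\theta^\ast$ maximizes $\ell$, which finishes (2). I expect this boundary case to be the main obstacle: from nothing more than a supporting hyperplane one must produce a direction in which $\ell$ is always strictly improvable, and correctly identifying the limiting ``face model'' on $S_0$ and checking $\emptyset\neq S_0\subsetneq\mathcal{G}_k$ is the delicate part.

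Finally, for (4) I would reduce the case $\dim(C)<n$ to the full-dimensional case inside $V$. Fix $g_0\in\mathcal{G}_k$ and set $\tilde z(\bar g)\coloneqq z(\bar g)-z(g_0)\in V$; the $z(g_0)$ terms cancel in $\textrm{Pr}_\theta$, and since $\theta\cdot\tilde z(\bar g)=\theta_V\cdot\tilde z(\bar g)$ for $\theta_V$ the orthogonal projection of $\theta$ onto $V$, both $\textrm{Pr}_\theta$ and $\ell$ depend on $\theta$ only through $\theta_V\in V$; writing $\tilde\ell(\phi)\coloneqq\phi\cdot(t-z(g_0))-\ln\sum_{\bar g}e^{\phi\cdot\tilde z(\bar g)}$ for $\phi\in V$ (note $t-z(g_0)\in V$, again using that $t$ lies in the affine hull of $z(\mathcal{G}_k)$) we get $\ell(\theta)=\tilde\ell(\theta_V)$. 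Now $\tilde\ell$ is the log-likelihood of the ERGM with statistics $\tilde z:\mathcal{G}_k\to V$ and target $t-z(g_0)$, whose set of realizable statistics has convex hull $C-z(g_0)$, full-dimensional in $V$; applying parts (1)--(2) inside $V$ gives a unique maximizer $\hat\phi\in V$ of $\tilde\ell$ iff $t-z(g_0)\in\operatorname{int}_V(C-z(g_0))$, i.e.\ iff $t\in\operatorname{rint}(C)$ by \eqref{eq:rintdef}. Since $\sup_{\theta\in\mathbb{R}^n}\ell=\sup_{\phi\in V}\tilde\ell$ and $\ell$ restricted to $V$ equals $\tilde\ell$, the point $\hat\theta\coloneqq\hat\phi$ is a maximizer of $\ell$, and it is the unique one lying in $V$ (two maximizers in $V$ would be two maximizers of $\tilde\ell$), establishing (4).
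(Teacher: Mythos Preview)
Your proof is correct but follows a somewhat different route than the paper. For concavity, the paper applies H\"{o}lder's inequality to the partition sum, whereas you compute $\nabla^2\psi=\operatorname{Cov}_\theta(z)$ directly---the standard exponential-family argument, and arguably cleaner. For the ``if'' direction of (2), both arguments establish coercivity of $-\ell$; the paper argues by contradiction, extracting a limiting direction from a diverging sequence via compactness of the sphere and invoking a separate geometric lemma (locating a vertex of $C$ on the far side of a hyperplane through $t$), while your uniform bound $\max_{\bar g}\omega\cdot(z(\bar g)-t)\ge\varepsilon$ coming straight from $B(t,\varepsilon)\subseteq C$ is shorter and avoids the subsequence extraction. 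The largest divergence is in the ``only if'' direction: the paper's argument is essentially one line---if $\hat\theta$ maximizes $\ell$ then $\nabla\ell(\hat\theta)=0$ gives $t=\sum_g\textrm{Pr}_{\hat\theta}(g)\,z(g)$, a strictly positive convex combination of the realizable statistics, hence $t\in\textrm{rint}(C)$. You instead prove the contrapositive by constructing, for each $\theta^\ast$, an improving ray via separating or supporting hyperplanes and analyzing the limiting face model on $S_0$; this is correct but considerably longer, and since you already recorded $\nabla\psi=\mathbb{E}_\theta[z]$ you could have taken the short route. Your reduction for (4) and the invariance (5) match the paper's treatment; as you note yourself, both rely on $t$ lying in the affine hull of $z(\mathcal{G}_k)$, an assumption the theorem statement does not make explicit but which the paper's own proof also uses.
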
 

\noindent The proof of Theorem \ref{t:mainthm} is the focus of \S \ref{s:concavityAndThm1}. 

Theorem \ref{t:degeneracy}, which is the focus of \S \ref{s:implications},  describes what happens when the target statistic $t$ is outside $C$ and implications for degeneracy of the associated ERGM model during the model training process.   There are various algorithms used in practice (most based on Markov chain Monte Carlo methods -- see e.g. \cite{Hunter2008ergm},\cite{jin2013fitting}) for seeking a parameter $\hat{\theta}$ that maximizes $L$; most or all work by stepping through the parameter space to obtain a sequence $\{\theta_i\}_{i \in \mathbb{N}}$ of parameters along which $L$ increases.  If the associated sequence of probability measures, $\{\textrm{Pr}_{\theta_i} \}_{i \in \mathbb{N}}$, converges towards a probably measure whose mass is unrealistically concentrated on a small number of graphs -- often the empty or complete graph -- the model is said to be \emph{degenerate} (\cite{handcock}). 

\begin{theorem} \label{t:degeneracy}
Suppose $t \not \in C$.   Then there exists a sequence of parameters $\{\theta_i\}_{i \in \mathbb{N}} \in \mathbb{R}^n$ such that 
\begin{enumerate}
\item $\lim_{i \to \infty} L(\theta_i) \to \infty$ and 
\item $\lim_{i \to \infty} \textrm{Pr}_{\theta_i} \left( \left \{g \in \mathcal{G}_k \mid z(g) \textrm{ is in the boundary of } C \right\} \right) = 1.$
\end{enumerate}
\end{theorem}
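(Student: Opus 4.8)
The plan is to exploit the strict separating hyperplane theorem. Since $C$ is the convex hull of the finite set $z(\mathcal{G}_k)$ it is compact, and because $t \notin C$ there is a nonzero vector $w \in \mathbb{R}^n$ with $w\cdot t > c$, where $c := \max_{g \in \mathcal{G}_k} w\cdot z(g) = \max_{v \in C} w\cdot v$ (the maximum over $\mathcal{G}_k$ is attained since $\mathcal{G}_k$ is finite). Writing $\delta := w\cdot t - c > 0$, I would take the sequence $\theta_i := i\,w$ and verify the two claims for it.

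For claim (1), I would bound the partition function from above: each summand satisfies $e^{\theta_i\cdot z(\bar g)} \le e^{ic}$, so $L(\theta_i) = e^{i\,w\cdot t}\big/\sum_{\bar g} e^{\theta_i\cdot z(\bar g)} \ge e^{i(c+\delta)}/(|\mathcal{G}_k|\,e^{ic}) = e^{i\delta}/|\mathcal{G}_k|$, which tends to $\infty$. For claim (2), set $M := \{g \in \mathcal{G}_k : w\cdot z(g) = c\}$, a nonempty set, and $F := \{v \in C : w\cdot v = c\}$, the face of $C$ exposed by the supporting hyperplane $\{x : w\cdot x = c\}$; then $z(M) \subseteq F$. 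The geometric point I would establish is that $F \subseteq \partial C$: if $C$ is not contained in the hyperplane $\{w\cdot x = c\}$, then $F$ is a proper face and hence lies in the boundary of $C$, whereas if $C$ is contained in that hyperplane, then $C$ has empty interior, so $\partial C = C \supseteq F$. Consequently $\textrm{Pr}_{\theta_i}\{g : z(g) \in \partial C\} \ge \textrm{Pr}_{\theta_i}(M)$, and dividing numerator and denominator of $\textrm{Pr}_{\theta_i}(M)$ by $e^{ic}$ gives $\textrm{Pr}_{\theta_i}(M) = |M|\big/\big(|M| + \sum_{g \notin M} e^{i(w\cdot z(g) - c)}\big)$; since $w\cdot z(g) - c < 0$ for every $g \notin M$, each term of this finite sum goes to $0$, so the ratio tends to $1$, and since probabilities never exceed $1$ the squeeze theorem yields the claim.

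The computational steps here are entirely routine; the one place that needs care is the claim that the face $F$ carrying the limiting mass is contained in $\partial C$, which is where the distinction $\dim C = n$ versus $\dim C < n$ enters — in the latter regime $C$ itself has empty interior, so $\partial C = C$ and the claim is automatic, meaning no separate treatment of that case is required. I also want to be sure that $M$ is genuinely nonempty, which is guaranteed because the maximum defining $c$ is attained over the finite set $\mathcal{G}_k$. Beyond these bookkeeping points I do not anticipate a real obstacle; the argument is essentially the contrapositive companion to the ``only if'' direction of Theorem~\ref{t:mainthm}, exhibited explicitly.
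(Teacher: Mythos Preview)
Your proposal is correct and follows essentially the same approach as the paper: choose a hyperplane separating $t$ from $C$, take the sequence $\theta_i = i w$ along the normal direction, and compute. Your bound for part~(1), $L(\theta_i)\ge e^{i\delta}/|\mathcal G_k|$, is in fact tidier than the paper's argument (which splits into cases on the sign of $\theta\cdot t$), and your treatment of part~(2) makes explicit the face argument that the paper only gestures at.
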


\begin{example} Here is a simple example that illustrates the content of Theorem \ref{t:degeneracy}.  Let $z(g)$ denote the single statistic that is the number of triangles in a graph $g \in \mathcal{G}_3$, the collection of graphs on $3$ vertices.  Of the $8$ graphs in $\mathcal{G}_3$, only one graph has one triangle, and the rest have $0$ triangles.  If $t$ is a nonrealizable statistic for the number of triangles, say $t = 5$, the expression for $L(\theta)$ becomes
$$L(\theta) = \frac{e^{\theta \cdot 5}}{7e^{\theta \cdot 0} + 1 e^{\theta \cdot 1}} = \frac{e^{5\theta}}{7+e^{\theta}}.$$  Hence $L(\theta) \to \infty$ as $\theta \to \infty$. 
\end{example} 
 
The qualitative meaning of $z(g)$ being in the boundary of $C$ is that the $z(g)$ is ``as extreme as possible'' in some way.  In \cite{handcock}, Handcock et al. describe the situation that  $t$ is in $C$ but is close to the boundary of $C$, calling it \emph{near degenerate} and stating that in this case Markov chain techniques can have poor convergence to the true maximum $\hat{\theta}$ (\cite{handcock}). Stability and near degeneracy are also investigated in \cite{Schweinberger}. 
Theorem \ref{t:degeneracy} suggests a related theoretical explanation for some cases of degeneracy observed during training -- namely, that if the set $C$ is ``skinny," then even a small error in estimating the coordinates of $t$ may push $t$ outside of $C$.  The most extreme case in this regard is that $\textrm{dim}(C) < n$, i.e. -- when the graph statistics that are the coordinate functions of $z$ are not linearly independent on $\mathcal{G}_k$.  However, even in cases where the graph statistics are linearly independent, there may be relationships among these statistics that dramatically constrain the width of $C$ in some direction.  A few theoretical results about how certain graph statistics are determined by other graph statistics, i.e. how the geometry of $C$ is constrained, are briefly mentioned at the end of Section \ref{s:implications}.

\section{Proof of Theorem~\ref{t:mainthm}} \label{s:concavityAndThm1}

The goal of this section is to justify Theorem \ref{t:mainthm}. Proposition \ref{p:concavity} establishes concavity of the log likelihood function and shows why strict concavity occurs if and only if $C$ is top-dimensional.   Lemma \ref{l:easydirection} shows (the easy direction) that if $L$ attains its supremum, the $t \in \textrm{rint}(C)$.  Lemma \ref{l:goodsector} is the technical engine that powers our proof of the opposite (harder) direction of Theorem \ref{t:mainthm}; its proof elucidates the geometrical importance of $t$ lying in the interior of $C$. Lemmas \ref{l:interiorImpliesMLE} and \ref{l:nontopdimcase} then prove this opposite direction in the cases that $C$ is and is not (respectively) top-dimensional, culminating in the proof of Theorem ~\ref{t:mainthm} at the end of the section.

Recall that a function $f:\mathbb{R}^n \to \mathbb{R}$ is said to be \emph{concave} if $f(tx_1+(1-t)x_2) \leq  tf(x_1) +(1-t)f(x_2)$ for all $x_1 \neq x_2$ in $\mathbb{R}^n$ and $t \in (0,1)$, and is \emph{strictly concave} if strict inequality holds.

\begin{proposition} \label{p:concavity}
The function $\ell$ is concave; furthermore $\ell$ is strictly concave if and only if $\textrm{dim}(C) = n$.  
\end{proposition}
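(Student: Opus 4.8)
The plan is to write $\ell(\theta) = \theta \cdot t - \psi(\theta)$, where $\psi(\theta) \coloneqq \ln\bigl(\sum_{\bar g \in \mathcal{G}_k} e^{\theta \cdot z(\bar g)}\bigr)$ is the log-partition function (the denominator appearing in $L$). Since $\theta \mapsto \theta \cdot t$ is affine, $\ell$ satisfies the (strict) concavity inequality exactly when $\psi$ satisfies the (strict) convexity inequality, and $\ell$ is affine along a given line precisely when $\psi$ is; so everything reduces to understanding the second-order behavior of $\psi$.

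First I would differentiate $\psi$ directly; because $\mathcal{G}_k$ is finite there is no analytic subtlety in differentiating under the sum. A short computation identifies $\nabla \psi(\theta)$ with the mean $\sum_{\bar g} z(\bar g)\,\textrm{Pr}_\theta(\bar g)$ of $z$ under $\textrm{Pr}_\theta$, and the Hessian $\nabla^2\psi(\theta)$ with the covariance matrix of the $\mathbb{R}^n$-valued statistic $z$ under $\textrm{Pr}_\theta$. In particular, for every $u \in \mathbb{R}^n$ and every $\theta$,
\[
u^\top \nabla^2\psi(\theta)\, u = \textrm{Var}_{\textrm{Pr}_\theta}(u \cdot z) \ge 0,
\]
so $\nabla^2\psi$ is positive semidefinite everywhere; hence $\psi$ is convex and $\ell$ is concave, which is the first assertion.

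For the equivalence in the second assertion, the key observation is that $\textrm{Pr}_\theta$ assigns strictly positive weight to every graph, so $\textrm{Var}_{\textrm{Pr}_\theta}(u \cdot z) = 0$ if and only if $u \cdot z(g)$ is constant over $g \in \mathcal{G}_k$, i.e. $u \cdot (z(g_1)-z(g_2)) = 0$ for all $g_1, g_2$, i.e. $u \in V^\perp$. Thus $\nabla^2\psi(\theta)$ is positive definite for all $\theta$ if and only if $V^\perp = \{0\}$, i.e. $\dim C = \dim V = n$. When $\dim C = n$, I would deduce strict concavity of $\ell$ by the usual reduction to one variable: for distinct $x_1, x_2 \in \mathbb{R}^n$, the function $s \mapsto \ell\bigl(x_1 + s(x_2-x_1)\bigr)$ on $[0,1]$ has second derivative $-(x_2-x_1)^\top \nabla^2\psi\,(x_2-x_1) < 0$ (positive definiteness of $\nabla^2\psi$), hence is strictly concave, which yields the strict concavity inequality for $\ell$ at $x_1, x_2$. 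When $\dim C < n$, pick $u \in V^\perp \setminus \{0\}$; then $u \cdot z(g) \equiv c$ for some constant $c$, so $\psi(\theta + su) = sc + \psi(\theta)$ and therefore $\ell(\theta + su) = \ell(\theta) + s(u \cdot t - c)$ is affine in $s$, so the concavity inequality holds with equality along this line and $\ell$ is not strictly concave.

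I expect the only step with real content to be the passage from ``$\nabla^2\psi$ positive definite everywhere'' to ``$\ell$ strictly concave,'' which is not the verbatim definition and needs the mean-value/Taylor argument on the convex domain $\mathbb{R}^n$ sketched above. The remaining pieces — the formulas for $\nabla\psi$ and $\nabla^2\psi$ and the identification of the kernel of the covariance matrix with $V^\perp$ — are routine manipulations of the exponential-family form, relying only on finiteness of $\mathcal{G}_k$ and full support of $\textrm{Pr}_\theta$.
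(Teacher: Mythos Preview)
Your proof is correct and complete, but it takes a different route from the paper's. You argue via second-order calculus: compute $\nabla^2\psi(\theta)$ as the covariance matrix of $z$ under $\textrm{Pr}_\theta$, read off positive semidefiniteness (hence convexity), and identify the kernel of the covariance with $V^{\perp}$ using full support of $\textrm{Pr}_\theta$. The paper instead proves the convexity inequality for $\kappa$ (your $\psi$) directly from H\"older's inequality applied to $F_{\theta_1,\tau}(g)=e^{\tau\,\theta_1\cdot z(g)}$ and $G_{\theta_2,\tau}(g)=e^{(1-\tau)\,\theta_2\cdot z(g)}$ with exponents $p=1/\tau$, $q=1/(1-\tau)$; the equality case of H\"older then characterizes exactly when strict convexity fails, namely when $g\mapsto e^{(\theta_1-\theta_2)\cdot z(g)}$ is constant on $\mathcal{G}_k$, which is equivalent to $\theta_1-\theta_2\in V^{\perp}$. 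Your approach buys statistical interpretation (gradient $=$ mean, Hessian $=$ covariance) and connects to standard exponential-family machinery; the paper's approach is more elementary in that it avoids differentiation entirely and gets the equality characterization for free from the H\"older equality condition, at the cost of being perhaps less transparent about why $V^{\perp}$ appears. Both routes are standard and short; yours requires the small extra step (which you correctly flag) of passing from ``Hessian positive definite everywhere'' to strict convexity via the one-variable reduction.
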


Our proof of Proposition \ref{p:concavity} uses the following simple case of H\"{o}lder's Inequality (for a proof, see e.g. \cite{folland}):
Let $0<p,q<1$ satisfy $\tfrac 1 p +  \tfrac 1 q = 1$, let $X$ be a finite set, and let $f,g:X \to [0,\infty)$ be functions. Then 
\begin{equation}\label{eq:Holder}
\sum_{x \in X} f(x)g(x) \leq \left(\sum_{x \in X} f(x)^p\right)^{1/p} \left(\sum_{x \in X} g(x)^q \right)^{1/q},
\end{equation} with
 equality if and only if there exists a real number $c$ such that  $ f(x)^p = cg(x)^q$ for every $x \in X$.

\begin{proof}
Define $\kappa:\mathbb{R}^n \to \mathbb{R}$ by 
$$\kappa(\theta) \coloneqq \ln \left(\sum_{g \in \mathcal{G}_k} e^{\theta \cdot z(g)} \right).$$
Since $\ell(\theta) = \theta \cdot t - \kappa(\theta)$, it suffices to prove that $\kappa$ is always convex, and is strictly convex if and only if $\textrm{dim}(C) = n$.  

So fix any parameters $\theta_1,\theta_2 \in \mathbb{R}^n$ and a real number $0 < \tau < 1$.  Define the functions $F_{\theta_1,\tau},G_{\theta_2,\tau}:\mathcal{G}_k \to [0,\infty)$  by
$$F_{\theta_1,\tau}(g) \coloneqq e^{z(g) \cdot \theta_1 \tau}, \quad G_{\theta_2,\tau}(g) \coloneqq e^{z(g)\cdot \theta_2(1-\tau)}$$
Since $F_{\theta_1,\tau}$ and $G_{\theta_2,\tau}$ are nonnegative, H\"{o}lder's Inequality (\ref{eq:Holder}) using $p := \tfrac{1}{\tau}$ and $q := \tfrac{1}{1-\tau}$ gives
\begin{multline} \label{eq:HolderIneqApplication}
    \sum_{g \in \mathcal{G}_k} e^{z(g) \cdot (\theta_1\tau + \theta_2(1-\tau))}
    = \sum_{g \in \mathcal{G}_k} F_{\theta_1,\tau}(g)G_{\theta_2,\tau}(g)  \\ 
    \leq \left( \sum_{g \in \mathcal{G}_k} F_{\theta_1,\tau}(g)^{\frac{1}{\tau}} \right)^{\tau} \left( \sum_{g \in \mathcal{G}_k} G_{\theta_2,\tau}(g)^{\frac{1}{1-\tau}} \right)^{1-\tau} \\
=\left(\sum_{g \in \mathcal{G}_k }e^{z(g) \cdot \theta_1}\right)^\tau \left(\sum_{g \in \mathcal{G}_k }e^{z(g) \cdot \theta_2}\right)^{1-\tau},
\end{multline}
with equality if and only if the function $F_{\theta_1,\tau}^p/G_{\theta_2,\tau}^q$ is  constant  on $\mathcal{G}_k$.
Taking the logarithm of both sides yields
\begin{equation} \label{eq:kappaconvex}
\kappa(\theta_1 \tau + \theta_2 (1-\tau)) \leq \tau \kappa(\theta_1) + (1-\tau) \kappa(\theta_2),\end{equation} 
with equality  if and only if $F_{\theta_1,\tau}^p/G_{\theta_2,\tau}^q$ is a constant function.  

Thus, $\kappa$ is always convex, and is strictly convex if and only if for every $\theta_1 \neq \theta_2$ in $\mathbb{R}^n$ and $\tau \in (0,1)$, the function 
\begin{equation}\label{eq:quotient} 
g \mapsto \frac{F_{\theta_1,\tau}(g)^p}{G_{\theta_2,\tau}(g)^q} = e^{z(g) \cdot (\theta_1 - \theta_2)}
\end{equation} is nonconstant on $\mathcal{G}_k$.  
Note that for any $g_1,g_2 \in \mathcal{G}_k$ and $\theta_1 \neq \theta_2 \in \mathbb{R}^n$,  $z(g_1) \cdot (\theta_1 - \theta_2) = z(g_2) \cdot (\theta_1-\theta_2)$ if and only if $z(g_1)-z(g_2)$ is perpendicular to $\theta_1-\theta_2$.   Thus, there exist $\theta_1 \neq \theta_2$ in  $\mathbb{R}^n$ such that the function \eqref{eq:quotient} is constant on $\mathcal{G}_k$ if and only if $\textrm{Image}(z)$  is contained in an affine subset of $\mathbb{R}^n$ of dimension strictly less than $n$.  
\end{proof}

Since $L$ is concave by Proposition \ref{p:concavity}, appealing to the standard (e.g. \cite[Prop. 1.1.8]{bertsekas2009convex}) fact that a differentiable, concave function on $\mathbb{R}^n$ attains its (global) supremum at precisely the set of points where its gradient is $\vec{0}$  immediately yields the following corollary, which underlies our Theorem \ref{t:mainthm} proof strategy. 

\begin{corollary}  \label{c:gradientroots}
The function $L$ attains its supremum at $\hat{\theta}$ if and only if $\nabla L(\theta) = \vec{0}$.
\end{corollary}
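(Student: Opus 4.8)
The plan is to deduce the corollary from Proposition~\ref{p:concavity} together with the cited fact that a differentiable concave function on $\mathbb{R}^n$ attains its global supremum exactly at its critical points \cite{bertsekas2009convex}, after first translating the assertion about $L$ into the corresponding one about $\ell = \ln L$.

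First I would verify that $\ell$ is finite-valued and differentiable everywhere, so that the cited fact genuinely applies. Writing $\ell(\theta) = \theta\cdot t - \kappa(\theta)$ with $\kappa(\theta) = \ln\!\big(\sum_{g\in\mathcal{G}_k} e^{\theta\cdot z(g)}\big)$ as in the proof of Proposition~\ref{p:concavity}, the expression inside the logarithm is a \emph{finite} sum of strictly positive smooth functions of $\theta$, hence is itself strictly positive and smooth; therefore $\kappa$, and so $\ell$, is $C^\infty$ on all of $\mathbb{R}^n$. Since $\ell$ is concave by Proposition~\ref{p:concavity}, the cited fact yields the equivalence: $\ell$ attains $\sup_{\theta}\ell(\theta)$ at $\hat\theta$ if and only if $\nabla\ell(\hat\theta) = \vec{0}$.

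Next I would transfer this statement to $L$. Because $L = e^{\ell}$ and $s \mapsto e^{s}$ is strictly increasing, $L$ and $\ell$ have exactly the same set of global maximizers, so $L$ attains its supremum at $\hat\theta$ precisely when $\ell$ does. Moreover, by the chain rule $\nabla L(\theta) = L(\theta)\,\nabla\ell(\theta)$, and $L(\theta) > 0$ for every $\theta$ since $L(\theta)$ is a ratio of positive quantities; hence $\nabla L(\hat\theta) = \vec{0}$ if and only if $\nabla\ell(\hat\theta) = \vec{0}$. Composing the three equivalences proves the corollary.

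This argument is essentially bookkeeping, and I do not anticipate a genuine obstacle. The only two points requiring a moment's care are (i) confirming that $\ell$ is finite-valued and differentiable on all of $\mathbb{R}^n$ — this is where finiteness of $\mathcal{G}_k$ is used, and it is what licenses invoking the unconstrained smooth-optimization fact rather than a subdifferential version of it — and (ii) keeping the passage between $L$ and $\ell = \ln L$ clean, which is harmless precisely because $L$ is strictly positive, so that $\ln$ is a smooth increasing bijection on the range of $L$. All of the real content sits in Proposition~\ref{p:concavity}, which is already proved, and in the cited convex-analysis fact.
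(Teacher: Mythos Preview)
Your proposal is correct and follows essentially the same approach as the paper: both deduce the corollary from Proposition~\ref{p:concavity} together with the cited fact from \cite{bertsekas2009convex} about differentiable concave functions. Your version is in fact more careful than the paper's one-line justification, which applies the cited fact directly to $L$ as though $L$ itself were concave; you correctly route the argument through $\ell$ (which is what Proposition~\ref{p:concavity} actually concerns) and then transfer the conclusion to $L$ via the monotone bijection $\ln$ and the chain-rule identity $\nabla L = L\,\nabla\ell$ with $L>0$.
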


\begin{lemma} \label{l:easydirection}
If $L$ attains its supremum, then $t \in \textrm{rint}(C)$. 
\end{lemma}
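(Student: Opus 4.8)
The plan is to use the first-order optimality condition supplied by Corollary~\ref{c:gradientroots} to pin down where a maximizer must sit, and then to translate that condition into the relative-interior description \eqref{eq:rintdef}.

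Suppose $L$ attains its supremum at some $\hat{\theta} \in \mathbb{R}^n$. By Corollary~\ref{c:gradientroots}, $\nabla L(\hat{\theta}) = \vec{0}$, and since $L = e^{\ell} > 0$ this is equivalent to $\nabla \ell(\hat{\theta}) = \vec{0}$. A direct computation from $\ell(\theta) = \theta \cdot t - \kappa(\theta)$ gives
$$\nabla \ell(\theta) = t - \frac{\sum_{g \in \mathcal{G}_k} z(g)\, e^{\theta \cdot z(g)}}{\sum_{g \in \mathcal{G}_k} e^{\theta \cdot z(g)}} = t - \sum_{g \in \mathcal{G}_k} \textrm{Pr}_{\theta}(g)\, z(g),$$
so the condition $\nabla \ell(\hat{\theta}) = \vec{0}$ reads $t = \sum_{g \in \mathcal{G}_k} \textrm{Pr}_{\hat{\theta}}(g)\, z(g)$. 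In other words, $t$ is a convex combination of the points of $z(\mathcal{G}_k)$ in which every coefficient $\textrm{Pr}_{\hat{\theta}}(g)$ is strictly positive.

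It then remains to show that any such point lies in $\textrm{rint}(C)$. Here I would invoke the standard fact that the extreme points $v_1,\ldots,v_m$ of $C = \textrm{conv}(z(\mathcal{G}_k))$ form a subset of $z(\mathcal{G}_k)$; fix graphs $g_1,\ldots,g_m \in \mathcal{G}_k$ with $z(g_j) = v_j$. For each $g \in \mathcal{G}_k$, write $z(g) = \sum_{j=1}^m \mu_j^{(g)} v_j$ with $\mu_j^{(g)} \ge 0$ and $\sum_{j=1}^m \mu_j^{(g)} = 1$, using for each $g_j$ the trivial representation $z(g_j) = v_j$. Substituting into the formula for $t$ and collecting the coefficient of each $v_j$ yields $t = \sum_{j=1}^m \lambda_j v_j$ with $\lambda_j = \sum_{g \in \mathcal{G}_k} \textrm{Pr}_{\hat{\theta}}(g)\, \mu_j^{(g)}$. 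Since every $\textrm{Pr}_{\hat{\theta}}(g) > 0$, we get $\sum_{j=1}^m \lambda_j = 1$ and $\lambda_j \ge \textrm{Pr}_{\hat{\theta}}(g_j) > 0$ for each $j$, so \eqref{eq:rintdef} gives $t \in \textrm{rint}(C)$.

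The gradient computation and the bookkeeping in the last step are routine; the only point that needs care — and which I regard as the crux — is the passage from ``$t$ is a positive combination of all points of the finite set $z(\mathcal{G}_k)$'' to ``$t$ is a positive combination of the extreme points of $C$,'' which is precisely where one uses that each extreme point of $C$ is realized by an actual graph. (Were $\textrm{rint}(C)$ instead defined via an arbitrary finite generating set, this step would be immediate, but we keep the definition \eqref{eq:rintdef}.)
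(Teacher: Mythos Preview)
Your proof is correct and follows the same approach as the paper: use Corollary~\ref{c:gradientroots} to compute $\nabla \ell(\hat{\theta}) = \vec{0}$ and thereby express $t$ as a strictly positive convex combination of the points $z(g)$. In fact you are more careful than the paper, which simply cites \eqref{eq:rintdef} at that point without explicitly reducing from a positive combination over all of $z(\mathcal{G}_k)$ to one over the extreme points $v_1,\ldots,v_m$; your final paragraph correctly fills this small gap.
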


\begin{proof}
Suppose $L$ attains its supremum at $\hat{\theta}$; then $\ell = \ln L$ attains its supremum at $\hat{\theta}$.  Then by Corollary \ref{c:gradientroots}, 
$$\vec{0} = \nabla \ell(\hat{\theta})= t - \frac{1}{\sum_{g \in \mathcal{G}_k} e^{\hat{\theta} \cdot z(g)}} \sum_{g \in \mathcal{G}_k} e^{\hat{\theta} \cdot z(g)}z(g),$$ which implies 
\begin{equation} \label{eq:linearcombo}
t = \sum_{g \in \mathcal{G}_k}  \left( \frac{e^{\hat{\theta} \cdot z(g)}}{\sum_{g \in \mathcal{G}_k e^{\hat{\theta} \cdot z(g)}} } \right) z(g).
\end{equation}
Because the exponential functions is nonnegative, the coefficient on $z(g)$ in each term in \eqref{eq:linearcombo} is positive, and  the sum of these coefficients is $1$ by construction.  Therefore $t \in \textrm{rint}(C)$ (c.f. equation \eqref{eq:rintdef}).

\end{proof}

The following technical lemma will be key to proving the opposite (harder) direction of Theorem \ref{t:mainthm}.


\begin{lemma} \label{l:goodsector}
Suppose $\textrm{dim}(C) = n$ and $t \in \textrm{interior}(C)$.  Fix any unit vector $\gamma \in S := \{s \in \mathbb{R}^n : |s| = 1\}.$  Then there exists an open neighborhood $U$ in $S$ of $\gamma$, a graph $g_{\gamma} \in \mathcal{G}_k$ and a real number $\delta > 0$ such that 
$$ z(g_{\gamma}) \cdot u -  t \cdot u  \geq  \delta$$
for all $u \in U$. 
\end{lemma}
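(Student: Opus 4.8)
The plan is to exploit interiority of $t$ in a single direction at a time: for the given $\gamma$ it yields one realizable statistic that overshoots $t$ in direction $\gamma$ by a definite amount, and then the neighborhood $U$ comes for free from continuity of a linear functional. No compactness of $S$ is needed here (that will presumably be used wherever this lemma is applied).

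First I would fix $\gamma \in S$ and use $t \in \textrm{interior}(C)$ to choose $\epsilon > 0$ with $t + \epsilon\gamma \in C$. Since the maximum of a linear functional over the convex hull of a finite set is attained on that set, there is a graph $g_\gamma \in \mathcal{G}_k$ with
$$ z(g_\gamma)\cdot\gamma - t\cdot\gamma \;=\; \max_{x \in C}\,(x - t)\cdot\gamma \;\geq\; \big((t + \epsilon\gamma) - t\big)\cdot\gamma \;=\; \epsilon, $$
where the inequality holds because $t + \epsilon\gamma \in C$. (Equivalently, writing $t + \epsilon\gamma = \sum_i \lambda_i z(g_i)$ as a convex combination forces $(z(g_i) - t)\cdot\gamma \geq \epsilon$ for at least one index $i$; take that $g_i$ to be $g_\gamma$.)

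Next I would note that the map $u \mapsto z(g_\gamma)\cdot u - t\cdot u = (z(g_\gamma) - t)\cdot u$ is continuous on $S$ and is at least $\epsilon$ when $u = \gamma$; hence it exceeds $\epsilon/2$ on some open neighborhood $U$ of $\gamma$ in $S$. Setting $\delta := \epsilon/2$ then gives $z(g_\gamma)\cdot u - t\cdot u \geq \delta$ for all $u \in U$, as required.

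I do not expect a genuine obstacle: the entire content sits in the first step, and the only hypothesis used essentially there is that $t$ lies in the \emph{interior} of $C$ — were $t$ on the boundary, a supporting hyperplane would produce a direction $\gamma_0$ with $z(g)\cdot\gamma_0 \leq t\cdot\gamma_0$ for every $g \in \mathcal{G}_k$, and no $g_{\gamma_0}$ with the stated property could exist. The assumption $\dim(C) = n$ is what makes the full unit sphere $S$ the correct index set of directions; in the degenerate case $\dim(C) < n$ the natural analogue would restrict $\gamma$ to the unit sphere of $V$ and replace $\textrm{interior}(C)$ by $\textrm{rint}(C)$ throughout the same argument.
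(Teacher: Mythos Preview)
Your proof is correct and follows essentially the same two-step structure as the paper's: first find a realizable statistic $z(g_\gamma)$ with $(z(g_\gamma)-t)\cdot\gamma$ bounded below by a positive constant, then extend to a neighborhood of $\gamma$ by continuity of $u\mapsto (z(g_\gamma)-t)\cdot u$. The only cosmetic difference is that the paper phrases the first step geometrically (pick a vertex of $C$ in the open half-space $\{x:(x-t)\cdot\gamma>0\}$), whereas you maximize the linear functional and bound it below via $t+\epsilon\gamma\in C$; your version has the minor advantage of making the constant explicit from the start.
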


\begin{proof}
For any $s \in S$, denote by $H_s$ the hyperplane in $\mathbb{R}^n$ that has normal vector $s$ and passes through the point $t$.  Then $\mathbb{R}^n \setminus H_s$ consists of two open half-spaces; denote by $A_s^+$ the half-space that $s$ points into (i.e. that contains points of the form $t+\epsilon s$ for all $\epsilon > 0$).  

Because $t$ is in the interior $C$ and $C$ is top-dimensional, we may pick a vertex $v_{\gamma} \in A_{\gamma}^+$ of $C$. Also, we may pick a graph $g_\gamma \in \mathcal{G}_k$ such that $z(g_\gamma) = v_\gamma$.  This is because $C \setminus v_{\gamma}$ is a convex set that is strictly smaller than $C$, but $C$ is, by definition, the smallest convex set that contains $\{z(g) \mid g \in \mathcal{G}_k\}$, so $v_{\gamma}$ must be in that set.   Since $v_\gamma$ is in open half-space $A_{\gamma}^+$, there exists an open neighborhood $U$ of $\gamma$ in $S$ such that $v_{\gamma} \in A_u^+$ for all $u \in U$.  Furthermore, we may choose the neighborhood $U$ of $\gamma$ to be small enough that there exists some distance $\delta > 0$ such that $d(v_\gamma, H_u) \geq \delta$ for all $u \in U$. (See Figure \ref{f:sectorDiagram}.)

\begin{figure}
\centering
\includegraphics[width=.5\linewidth]{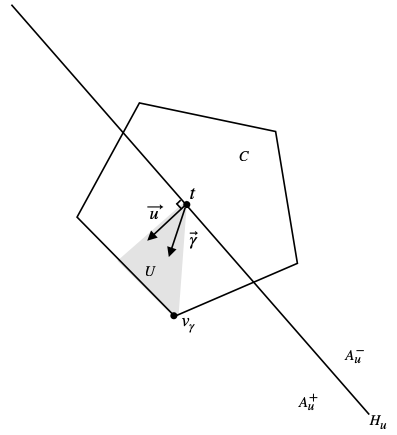}
\caption{Illustration of the proof of Lemma \ref{l:goodsector}. The shaded window $U$ of angles is constructed so that $v_{\gamma}$ is distance at least $\delta>0$ (in the positive $u$ direction) from $H_u$ for all $u \in U$. }
\label{f:sectorDiagram}
\end{figure}
 
 Recall that for any $x \in \mathbb{R}^n$ and unit vector $u$, $u \cdot x$ is the signed distance of $x$ in direction $u$ from the origin (technically, the scalar projection of $x$ onto $u$).  Since $t \in H_u$ and $u$ ``points into'' $A_u^+$, every point $x \in A_u^+$ satisfies $x \cdot u > t \cdot u$, and $x \cdot u - t \cdot u = d(x,H_u)$. Consequently, for all $u \in U$, $v_{\gamma} \cdot u - t \cdot u \geq \delta$.  
\end{proof}


\begin{lemma} \label{l:interiorImpliesMLE}
Suppose $C$ is top-dimensional and  $t \in \textrm{interior}(C)$.  Then there exists a unique $\hat{\theta} \in \mathbb{R}^n$ such that $\hat{\theta}$ maximizes $L$, i.e. $$L(\hat{\theta}) = \sup_{\theta \in \mathbb{R}^n} L(\theta).$$
\end{lemma}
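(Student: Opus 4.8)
The plan is to obtain existence from \emph{coercivity} of the log-likelihood $\ell$ --- that is, $\ell(\theta) \to -\infty$ as $|\theta| \to \infty$ --- together with continuity, and to obtain uniqueness from the strict concavity established in Proposition~\ref{p:concavity}. Recall that $\ell(\theta) = \theta \cdot t - \kappa(\theta)$, where $\kappa(\theta) = \ln\bigl(\sum_{g \in \mathcal{G}_k} e^{\theta \cdot z(g)}\bigr)$. The starting observation is the elementary ``log-sum-exp'' bound: for any single graph $h \in \mathcal{G}_k$ we have $\sum_{g} e^{\theta \cdot z(g)} \ge e^{\theta \cdot z(h)}$, hence $\kappa(\theta) \ge \theta \cdot z(h)$, and therefore
\[
\ell(\theta) \;\le\; \theta \cdot t - \theta \cdot z(h) \;=\; \theta \cdot \bigl(t - z(h)\bigr).
\]

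Next I would feed Lemma~\ref{l:goodsector} into this bound, direction by direction. Fix $\gamma \in S$; Lemma~\ref{l:goodsector} supplies an open neighborhood $U_\gamma$ of $\gamma$ in $S$, a graph $g_\gamma \in \mathcal{G}_k$, and $\delta_\gamma > 0$ such that $z(g_\gamma)\cdot u - t \cdot u \ge \delta_\gamma$ for all $u \in U_\gamma$. Writing a nonzero parameter as $\theta = r u$ with $r = |\theta|$ and $u = \theta/|\theta|$, the displayed bound with $h = g_\gamma$ gives $\ell(\theta) \le r\,\bigl(t\cdot u - z(g_\gamma)\cdot u\bigr) \le -r\,\delta_\gamma$ whenever $u \in U_\gamma$. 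Now invoke compactness of $S$: the collection $\{U_\gamma\}_{\gamma \in S}$ is an open cover of $S$, so finitely many $U_{\gamma_1}, \dots, U_{\gamma_N}$ cover $S$; put $\delta^\ast := \min_{1 \le j \le N} \delta_{\gamma_j} > 0$. Every nonzero $\theta$ has its direction in some $U_{\gamma_j}$, so $\ell(\theta) \le -|\theta|\,\delta^\ast$, which proves $\ell(\theta) \to -\infty$ as $|\theta| \to \infty$. Consequently, for $c := \ell(\vec{0})$, the superlevel set $\{\theta : \ell(\theta) \ge c\}$ is bounded (by coercivity), closed (by continuity of $\ell$), and nonempty; hence compact. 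The continuous function $\ell$ attains a maximum over this set at some $\hat{\theta}$, and since $\ell < c \le \ell(\hat\theta)$ off the set, $\hat\theta$ is a global maximizer of $\ell$ on $\mathbb{R}^n$. As $L = e^{\ell}$ and $x \mapsto e^x$ is strictly increasing, $\hat\theta$ maximizes $L$ as well.

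For uniqueness, suppose $\hat\theta_1 \ne \hat\theta_2$ both maximized $\ell$. Since $\dim(C) = n$, Proposition~\ref{p:concavity} says $\ell$ is strictly concave, so $\ell\bigl(\tfrac12(\hat\theta_1 + \hat\theta_2)\bigr) > \tfrac12\ell(\hat\theta_1) + \tfrac12\ell(\hat\theta_2) = \sup_{\theta}\ell(\theta)$, a contradiction; thus $\hat\theta$ is unique. (Alternatively, one could combine Corollary~\ref{c:gradientroots} with injectivity of the gradient of a strictly concave function.) The only delicate point in the whole argument is the \emph{uniformity} of the decay rate over all directions $\gamma$: a pointwise-in-$\gamma$ estimate would not preclude the optimum escaping to infinity along a path of shrinking decay rate. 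This is precisely what the open-neighborhood formulation of Lemma~\ref{l:goodsector} is built to prevent, and compactness of $S$ converts it into the single constant $\delta^\ast$; everything else --- the log-sum-exp bound, coercivity $\Rightarrow$ attainment, and the strict-concavity uniqueness step --- is routine.
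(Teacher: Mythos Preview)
Your proof is correct and follows essentially the same strategy as the paper: both arguments combine the log-sum-exp bound $\ell(\theta)\le \theta\cdot(t-z(h))$ with Lemma~\ref{l:goodsector} and compactness of $S$ to force $\ell(\theta)\to-\infty$ along any escaping sequence, then invoke strict concavity (Proposition~\ref{p:concavity}) for uniqueness. The only cosmetic difference is packaging: the paper argues by contradiction using \emph{sequential} compactness of $S$ (pass to a subsequence so $\theta_j/|\theta_j|\to\gamma$, apply Lemma~\ref{l:goodsector} once at $\gamma$), whereas you argue directly via the \emph{open-cover} formulation (cover $S$ by the neighborhoods $U_\gamma$, extract a finite subcover, take $\delta^\ast=\min\delta_{\gamma_j}$), which yields the slightly sharper uniform bound $\ell(\theta)\le -\delta^\ast|\theta|$ before passing to the standard coercivity-plus-continuity conclusion.
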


\begin{proof}
Suppose $\sup_{\theta} L(\theta)$ is not attained; then  $\sup_{\theta} \ell(\theta)$ is not attained. Fix any constant $k < \sup_{\theta} \ell(\theta)$.  Since $\ell$ is continuous, it attains its maximum on any compact set.  Thus, there exists a sequence of points $\{\theta_j\}_{j \in \mathbb{N}}$ with $|\theta_j| \to \infty$ such that 
\begin{equation} \label{eq:boundedbelow}
\ell(\theta_j) \geq k
\end{equation} for all $j$.  
By passing to a subsequence, if necessary, we may assume that the sequence  $ \left \{ \frac{\theta_j }{|\theta_j |} \right \}_{j \in \mathbb{N}}$ converges (since $S\coloneqq\{x \in \mathbb{R}^n : |x| =1\}$ is compact). Set $\gamma \in S$ to be the vector
\begin{equation} \label{eq:gamma} 
\gamma := \lim_{j \to \infty} \frac{\theta_j }{|\theta_j |}.
\end{equation}

By Lemma \ref{l:goodsector}, there exists an open neighborhood $U$ of $\gamma$ in $S$,  a graph $g_{\gamma} \in \mathcal{G}_k$, and a constant $\delta > 0$ such that 
$$u \cdot (t - z(g_{\gamma})) < -d$$ for all $u \in U$. 

By \eqref{eq:gamma}, there exists $M \in \mathbb{N}$ such that  $\frac{\theta_j }{|\theta_j |} \in U$ whenever $j \geq M$.  Thus, whenever $j \geq M$, 
\begin{multline} \ell(\theta_j) = \theta_j \cdot t -  \ln \left( \sum_{g \in \mathcal{G}_k} e^{(\theta_j) \cdot z(g)} \right) 
\leq \theta_j \cdot t -  \ln \left(  e^{\theta_j \cdot z(g_{\gamma})} \right) \\ = \theta_j \cdot (t -z(\bar{g}))  =  |\theta_j| \frac{\theta_j}{|\theta_j|}  \cdot (t -z(g_{\gamma})) \leq |\theta_j| (-\delta).
 \end{multline}
But then 
$$\lim_{j \to \infty} \ell(\theta_j) \leq \lim_{j \to \infty} |\theta_j| (-\delta) = -\infty,$$
which contradicts \eqref{eq:boundedbelow}.  

For uniqueness, note that $\ell$ is strictly concave by Proposition \ref{p:concavity}, and it is straightforward to prove that a strictly concave function attains its maximum at at most one point. 

\end{proof}

\color{black}

We now consider the case that $C$ is not top-dimensional.  
Recall that $V$ denotes the smallest linear subspace of $\mathbb{R}^n$ that contains a translate of $C$, i.e. 
$$V \coloneqq  \textrm{span}(\{z(g_1) - z(g_2) \mid g_1,g_2 \in \mathcal{G}_k\})$$ and
set $V^{\perp} = \{u \in \mathbb{R}^n \mid u \cdot v = 0 \textrm{ for all } v \in V\}$.

\begin{lemma} \label{l:nontopdimcase}
Suppose $C$ is not top-dimensional and  $t \in \textrm{rint}(C)$.  Then there exists a unique $\hat{\theta} \in V \subset \mathbb{R}^n$ such that $\hat{\theta}$ maximizes $L$, i.e. $$L(\hat{\theta}) = \sup_{\theta \in \mathbb{R}^n} L(\theta).$$
\end{lemma}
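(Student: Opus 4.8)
The plan is to reduce to the top-dimensional case already settled in Lemma \ref{l:interiorImpliesMLE} by restricting attention to the subspace $V$. First I would fix a basepoint: choose $g_0 \in \mathcal{G}_k$, set $z_0 := z(g_0)$, and pass to the translated statistics $z'(g) := z(g) - z_0$. Since $V = \textrm{span}\{z(g_1)-z(g_2) : g_1,g_2 \in \mathcal{G}_k\}$ equals $\textrm{span}\{z(g)-z_0 : g \in \mathcal{G}_k\}$, every $z'(g)$ lies in $V$, and the translated convex hull $C' := C - z_0 = \textrm{conv}\{z'(g) : g \in \mathcal{G}_k\}$ has affine hull exactly $V$; in particular $C'$ is top-dimensional as a subset of $V$. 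Likewise, because $t \in \textrm{rint}(C)$ lies in the affine hull $z_0 + V$ of $C$, the point $t' := t - z_0$ lies in $V$ and in $C'$; since translation is a homeomorphism and $C'$ is top-dimensional in $V$, the hypothesis $t \in \textrm{rint}(C)$ is equivalent to $t'$ lying in the interior of $C'$ relative to $V$.

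Next I would record the key splitting. Write any $\theta \in \mathbb{R}^n$ as $\theta = \theta_V + \theta_\perp$ with $\theta_V \in V$ and $\theta_\perp \in V^{\perp}$. For every $g$, the vector $z(g) - z_0 \in V$ is orthogonal to $\theta_\perp$, so $\theta_\perp \cdot z(g) = \theta_\perp \cdot z_0$, and the same identity holds with $z(g)$ replaced by $t$. Substituting into $\ell(\theta) = \theta \cdot t - \ln \sum_{g} e^{\theta \cdot z(g)}$, the $\theta_\perp$ contributions cancel, giving $\ell(\theta) = \ell(\theta_V)$ for all $\theta \in \mathbb{R}^n$ (which is exactly part (5) of Theorem \ref{t:mainthm}), and moreover, for $\phi \in V$,
\[
\ell(\phi) = \phi \cdot t' - \ln \sum_{g \in \mathcal{G}_k} e^{\phi \cdot z'(g)} =: \ell'(\phi),
\]
which is precisely the log-likelihood of the exponential family built from the statistics $z' : \mathcal{G}_k \to V$ with target $t'$.

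Then I would identify $V$ with $\mathbb{R}^d$, where $d := \dim C$, via an orthonormal basis, and apply Lemma \ref{l:interiorImpliesMLE} to the family $(z', t')$ inside $V$: its hypotheses hold because $C'$ is top-dimensional in $V$ and $t'$ is in the interior of $C'$. This yields a unique $\hat{\theta} \in V$ maximizing $\ell'$, equivalently maximizing $\ell|_V$. Finally, since $\ell(\theta) = \ell(\theta_V)$ for every $\theta \in \mathbb{R}^n$, we get $\sup_{\theta \in \mathbb{R}^n} \ell(\theta) = \sup_{\phi \in V} \ell(\phi) = \ell(\hat{\theta})$, so this $\hat{\theta}$ maximizes $L$ over all of $\mathbb{R}^n$; and any maximizer of $L$ lying in $V$ is a maximizer of $\ell|_V = \ell'$, hence equals $\hat{\theta}$ by the uniqueness clause of Lemma \ref{l:interiorImpliesMLE}.

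I do not expect a genuinely hard step here — the argument is a bookkeeping reduction — but the point that needs the most care is checking that the relative-interior hypothesis transports correctly under the translate-and-restrict operation, i.e. that ``$t \in \textrm{rint}(C)$ in $\mathbb{R}^n$'' matches ``$t'$ in the interior of $C'$ in $V$,'' since this equivalence is exactly what licenses the clean invocation of the top-dimensional Lemma \ref{l:interiorImpliesMLE}.
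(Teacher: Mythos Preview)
Your proposal is correct and follows essentially the same route as the paper: both arguments translate the data so that $C$ sits inside $V$, use orthogonality to show $\ell$ depends only on the $V$-component of $\theta$, and then invoke Lemma~\ref{l:interiorImpliesMLE} on the top-dimensional problem in $V$. Your write-up is more explicit (choosing a concrete basepoint $z_0$, spelling out why $t' \in \operatorname{int}_V(C')$, and verifying the uniqueness clause), but the underlying idea and structure are the same.
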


\begin{proof}
Consider any $n_1,n_2 \in V^{\perp}$, $\epsilon_1,\epsilon_2 \in \mathbb{R}$, and $\theta \in \mathbb{R}^n$.
Algebraic manipulation and the fact that $z(g) - t \in V^{\perp}$ for all $g \in \mathcal{G}_k$ gives 
\begin{multline}(t+\epsilon_1n_1) \cdot (\theta + \epsilon_2n_2) - \ln  \left( \sum_{g \in \mathcal{G}_k} e^{(z(g)+\epsilon_1n_1) \cdot (\theta+\epsilon_2n_2)} \right) \\
 =t \cdot \theta -  \ln  \left( \sum_{g \in \mathcal{G}_k} e^{z(g) \cdot \theta+ (z(g)-t) \cdot \epsilon_2n_2) } \right) = t \cdot \theta -  \ln  \left( \sum_{g \in \mathcal{G}_k} e^{z(g) \cdot \theta} \right),
 \end{multline}
 i.e. $\ell$ is invariant under translating  $t$ and all points $z(g)$ by any fixed vector in $V^{\perp}$, as well as under changing the $V^{\perp}$ component of $\theta$.  Consequently, without loss of generality, we may assume that $C \subset V$ and consider only parameters $\theta$ that lie in $V$.  Thus, we may view $C$ as a top-dimensional polytope embedded in $V$, with $t$ in the interior of $C$, and apply Lemma \ref{l:interiorImpliesMLE}.
\end{proof}

\begin{proof}[Proof of Theorem~\ref{t:mainthm}] Lemma  \ref{l:easydirection} proves $t \in \textrm{rint}(C)$ is a necessary condition for the maximum likelihood to be attained. Lemmas \ref{l:interiorImpliesMLE} and \ref{l:nontopdimcase} prove the condition is sufficient and the uniqueness (in all of $\mathbb{R}^n$ if $C$ is top-dimensional, and in $V$ otherwise) of the maximizing parameter $\hat{\theta}$. 
\end{proof}



\section{Implications of approximate linear dependence of statistics for degeneracy} \label{s:implications}

In this section, we prove Theorem 2 and discuss its implications for the situation that  a modeler attempts to calibrate an ERGM model (i.e. find a value of $\hat{\theta}$ that maximizes $L$) based on a value of $t$ that is estimated from empirical data and lies outside of $C$.  For more on model degeneracy and, in particular, near degeneracy -- 
-- which refers to the situation that $t$ lies inside but near the boundary of $C$ -- we refer the interested reader to \cite{handcock}.  In the discussion after the proof of Theorem 2, we 
remark on a related possible explanation of some instances of degeneracy: if 
$C$ is ``skinny,'' even a tiny error estimating the coordinates of $t$ could push $t$ outside of $C$.  Moreover, results in extremal graph theory may provide theoretical insight into why some sets $C$; in particular, the number of triangles in a graph strongly constrains other properties of the graph, and thus using triangles as a graph statistic may lead to the set $C$ being ``skinny.''

 We prove the following (slightly more precise version) of Theorem 2:\\

\begin{thm2}
Suppose $t \not \in C$.  Let $H$ be any hyperplane in $\mathbb{R}^n$ that separates $t$ from $C$, and let $\theta$ be a normal vector to $H$ that points into the half space that contains $t$.  Then 
\begin{enumerate}[label=(\roman*)]
\item  \label{i:likelihoodInfty} $\lim_{r \to \infty} L(r\theta) = \infty$, and
\item  \label{i:extremegraphs} $\lim_{r \to \infty}  \textrm{Pr}_{r \theta} \{g \in \mathcal{G} \mid z(g) = \textrm{max}_{\bar{g} \in \mathcal{G}} \theta \cdot z(\bar{g})\} =1.$
\end{enumerate}
\end{thm2}

\medskip

The condition $z(g) = \textrm{max}_{\bar{g} \in \mathcal{G}} \theta \cdot z(g)$ in item \ref{i:extremegraphs} above implies that
$z(g)$ is in the relative (meaning viewing $C$ as a subset of its affine hull) boundary of $C$, i.e. the statistic $z(g)$ is ``as extreme as possible'' in some sense.  

\begin{proof}
The assumptions immediately imply $\theta \cdot t > \theta \cdot z(g)$ for all $g \in \mathcal{G}_k$ (since $t$ is ``farther'' in the positive $\theta$ direction than points in $C$ are).  Since $z(\mathcal{G}_k)$ is discrete, it follows that there exists $\epsilon > 0$ such that $z(g) \cdot \theta < (1-\epsilon) t \cdot \theta$ for all $g \in \mathcal{G}_k$.  

\emph{Case 1:} $\theta \cdot t > 0$. Then for any $r > 0$,  
\begin{multline*}
\ell(r\theta) = r\theta \cdot t - \ln \left( \sum_{g \in \mathcal{G}_k} e^{z(g) \cdot r\theta}\right) 
 \geq r\theta \cdot t - \ln \left( \sum_{g \in \mathcal{G}_k} e^{(1-\epsilon)r \theta \cdot t}\right)  \\
 = r\theta \cdot t - \ln |\mathcal{G}_k| - (1-\epsilon)r \theta \cdot t = r\epsilon \theta \cdot t - \ln |\mathcal{G}_k|.
\end{multline*}

\emph{Case 2:} $\theta \cdot t \leq 0$.  Then $z(g) \cdot \theta < 0$ for all $g \in \mathcal{G}_k$.  
Then for any $r > 0$ and any fixed $g_0 \in \mathcal{G}_k$, 
 $$\ell(r\theta) \geq - \ln \left( \sum_{g \in \mathcal{G}_k} e^{z(g) \cdot r\theta}\right) \geq - \ln (e^{z(g_0) \cdot r \theta})=-r z(g_0) \cdot \theta.$$
 In both cases, \ref{i:likelihoodInfty} follows immediately.

The limit \ref{i:extremegraphs} follows immediately from the fact that $z(\mathcal{G}_k)$ is a discrete set, and the weight $P_{r\theta}$ assigns to a graph $g$ is proportional to $e^{r\theta \cdot z(g)}$.
 \end{proof}

Theorem \ref{t:degeneracy} may be of particular potential importance to modelers when $C$ is ``skinny,'' since even a tiny error estimating the coordinates of $t$ could push $t$ outside of $C$.  When $\textrm{dim}(C) < n$ -- i.e. when the statistics that are the component functions of $z$ are not linearly independent -- the set $C$ has zero measure, so it is unlikely for an empirically estimated value of $t$ to lie in $C$.  An example of two graph statistics that can easily be seen to be linearly dependent are mean vertex degree and total number of edges (c.f. Figure \ref{f:rint}); while ERGMs practitioners know to avoid simultaneously using both of these statistics, there may be other collections of statistics that are linearly dependent but cannot be determined to be so. There is, to the best of our knowledge, no general method (other than directly computing a sufficient collection of graph statistics) for verifying that a collection of graph statistics is linearly independent.  

Even when a collection of graph statistics are linearly independent, it is possible that they are ``approximately linearly dependent,'' meaning that the value of one statistic is constrained to lie in a ``small'' interval by the values of the other statistics.  
Extremal graph theory provides a wealth of results governing  how big or small some graph statistic can be, given constraints on other statistics about the graph. For example, Tur\'{a}n's Theorem, a cornerstone of extremal graph theory, gives an upper bound on the number of edges that a graph with $n$ vertices that does not contain a $(r+1)$-clique can have.  For a survey of results in extremal graph theory, we refer the interested reader to \cite{bollobas2004extremal} and \cite{nikiforov2011some}.  
Restricting attention to empirically observed social network graphs, work by Faust asserts that the number of triangles is ``well-predicted by lower order graph features (density and dyads), accounting for around 90\% of the variability in triad distributions'' \cite{faust2010puzzle}.  Triangle counts are known to be associated with degenerate models  (\cite{Hunter2008ergm, goodreau2008statnet}).  
 On the algorithmic side, developing techniques for estimating the number of $k$-cliques in a graph based on partial information about the graph is an active area of inquiry (see. e.g. \cite{eden2020approximating}).  Approximate linear dependence of statistics, coupled with imperfectly estimated values of $t$, may be a mechanism through which degeneracy arises during model training. 
\color{black}

\bibliographystyle{plain}
\bibliography{MLEProof}

\end{document}